\DeclareMathOperator{\fldchar}{char}
\DeclareMathOperator{\Frac}{Frac}
\DeclareMathOperator{\sing}{sing}
\DeclareMathOperator{\rk}{rk}
\DeclareMathOperator{\Jac}{Jac}
\def\A{\mathbb{A}}
\def\P{\mathbb{P}}
\def\F{\mathbb{F}}
\theoremstyle{plain}
\newtheorem{thm}{Theorem}
\newtheorem{Lem}[thm]{Lemma}
\newtheorem{Cor}[thm]{Corollary}
\theoremstyle{remark}
\newtheorem{Rem}[thm]{Remark}
\newtheorem{Que}[thm]{Question}
\renewcommand\footnotemark{}
\newcommand{\subjclass}[2][2020]{%
  \let\@oldtitle\@title%
  \gdef\@title{\@oldtitle\footnotetext{#1 \emph{Mathematics subject classification.} #2}}%
}
\newcommand{\keyywords}[1]{%
  \let\@@oldtitle\@title%
  \gdef\@title{\@@oldtitle\footnotetext{\emph{Key words and phrases.} #1.}}%
}
\begin{document}

\title{Square values of several polynomials over a finite field}

\subjclass{Primary 14G15, 11T06; Secondary 14G05.}
\keyywords{square values, polynomials over a finite field, quadratic forms, Lang--Weil bound}
\author{Kaloyan Slavov\\[0.2em]}

\maketitle

\begin{abstract}
Let $f_1,\dots,f_m$ be polynomials in $n$ variables with coefficients in a finite field $\F_q$. We estimate the number of points 
$\underline{x}$ in $\F_q^n$ such that each value $f_i(\underline{x})$ is a nonzero square in $\F_q$. The error term is especially small when the $f_i$ define smooth projective quadrics with nonsingular intersections. We improve the error term in a recent work by Asgarli--Yip on mutual position of smooth quadrics. 
\end{abstract}

\section{Introduction}

Let $\F_q$ be a finite field with $q$ odd, and let 
$f\in\F_q[x_1,\dots,x_n]$ be a polynomial which is not a perfect square in $\overline{\F_q}[x_1,\dots,x_n]$. It is well-known that as 
$\underline{x}\in\F_q^n$ is chosen uniformly at random, the probability that $f(\underline{x})$ is a nonzero square (respectively, a non-square) in $\F_q$ is $1/2+O(q^{-1/2})$, where the implied constant depends only on $\deg f$. 

Consider now a finite field $k$ with $\fldchar k\neq 2$ and several polynomials $f_1,\dots,f_m\in k[x_1,\dots,x_n]$. For a finite field $\F_q\supset k$, as 
$\underline{x}\in\F_q^n$ is chosen uniformly at random, let $E_i$ (for $i=1,\dots,m$) be the event that $f_i(\underline{x})$ is a nonzero square in $\F_q$. We are interested in a necessary and sufficient condition on $f_1,\dots,f_m$ for the events $E_1,\dots,E_m$ to be ``independent" (up to an error). More precisely, 

\begin{Que}
Find a necessary and sufficient condition on $f_1,\dots,f_m$ for the following statement to hold: for a finite field $\F_q\supset k$ and a subset $S\subset\{1,\dots,m\}$, as $\underline{x}\in\F_q^n$ is chosen uniformly at random, the probability that $f_i(\underline{x})$ is a nonzero square in $\F_q$ for $i\in S$ and a non-square in $\F_q$ for $i\notin S$ is $1/2^m+O(q^{-1/2})$.  
\label{Q:1}
\end{Que}

\begin{Que} 
Under additional assumptions on $f_1,\dots,f_m$, improve the error bound $O(q^{-1/2})$.
\label{Q:2}
\end{Que}

The motivation for studying the above questions is a classical discrete geometry problem about mutual position of conics (see \cite{Asgarli_Yip} for references and latest results). Let $C$ be a smooth conic in the projective plane $\P^2$ over a finite field. In analogy with conics over the reals, one says that a point $P\in\P^2(\F_q)$ not on $C$ is external to $C$ if there exist two $\F_q$-tangent lines to $C$ passing through $P$, and internal to $C$ if there exist no $\F_q$-tangent lines to $C$ through $P$. Consider two distinct smooth conics $C$ and $D$. 
G. Korchm\'aros asks for an estimation of the number of points in $\P^2(\F_q)$ that are external to $C$ but internal to $D$. Asgarli--Yip \cite{Asgarli_Yip} consider more generally internal and external points to a smooth quadric $C=\{f_1=0\}$ in $\P^n$ with $n$ even and prove 
that a point $P\in\P^n(\F_q)$ not on $C$ is external (respectively, internal) to $C$ if and only if the value of a certain nonzero constant multiple of $f_1$ at $P$ is a square (respectively, a non-square) in $\F_q$. Thus, given two distinct smooth quadrics $C=\{f_1=0\}$ and $D=\{f_2=0\}$ in $\P^n$, the problem is to estimate the number of points $\underline{x}\in\P^n(\F_q)$ such that $f_1(\underline{x})$ is a nonzero square in 
$\F_q$ but $f_2(\underline{x})$ is a non-square in $\F_q$. To this end, Asgarli--Yip consider the hypersurface 
$\{f_1f_2=0\}$ in 
$\P^n$ and apply an advanced result of A. Rojas-Le\'on on estimates for singular multiplicative character sums. Our first main result Theorem \ref{thm_independent_events} below provides a simpler proof of \cite[Theorem 1.3]{Asgarli_Yip} and improves the implied constant in the error bound (see Corollary \ref{Cor:smooth_quadrics}). Moreover, Theorem
\ref{thm_improved_error_m_2} (which is the $m=2$ case of our second main result) improves significantly the error bound in the estimation under an additional assumption that $C\cap D$
is smooth (or has singular locus of dimension at most $n-3$).

Our approach to Question \ref{Q:1} is based on the Lang--Weil bound \cite{LW_bound} for the number of $\F_q$-points on the variety $X\subset\A^{n+m}$ defined as the vanishing of $f_i(\underline{x})-s_i^2$ for $i\in S$ and 
$f_i(\underline{x})-\nu s_i^2$ for $i\notin S$ (where $\nu\in\F_q$ is a non-square). We determine a condition for $X$ to be geometrically irreducible. 

For Question \ref{Q:2}, we consider $f_1,\dots,f_m\in k[x_0,\dots,x_n]$ homogeneous of degree $2$. We build a complete intersection $X\subset\P^{n+m}$ of dimension $n$. Under assumptions that $\{f_i=0\}$ and their intersections are nonsingular (or have low-dimensional singular loci), $X$ is nonsingular (or has a low-dimensional singular locus). Then we apply the bound of Hooley and Katz (\cite{Hooley}, \cite{Katz}) for $\#X(\F_q)$ with small error term. In contrast, the character sums approach of Asgarli--Yip is based on taking the product of the given polynomials $f_i$ from the onset: this creates a hypersurface whose singular locus is inevitably of large dimension, and
additional assumptions on the $f_i$ cannot improve the error bound. 

For a collection $\{g_i\}$ of polynomials (respectively, homogeneous polynomials), $V(\{g_i\})$ denotes the vanishing locus $\cap \{g_i=0\}$ in the appropriate affine (respectively, projective) space. 

\subsection{Answer to Question \ref{Q:1}}
It turns out that the obvious necessary condition (i) for (ii) in the theorem below to hold is also sufficient. 

\begin{thm} Let $n,m\geq 1$. Let $k$ be a finite field with 
$\fldchar k\neq 2$, and let 
$f_1,\dots,f_m\in k[x_1,\dots,x_n]$. 
The following are equivalent:

\begin{itemize}
\item[(i)] If $\varepsilon_1,\dots,\varepsilon_m\in\{0,1,-1\}$ and $\lambda\in k^*$ are such that $\lambda\prod_{i=1}^m f_i^{\varepsilon_i}$ is a square in $k(x_1,\dots,x_n)$, then 
$\varepsilon_1=\dots=\varepsilon_m=0$;

\item[(ii)] Let $\F_q\supset k$ be a finite field.
Let $S$ be a subset of $\{1,\dots,m\}$. Then the
number of $\underline{x}=(x_1,\dots,x_n)$ in $\F_q^n$ such that $f_i(\underline{x})$ is a nonzero square in $\F_q$ for $i\in S$ and a non-square in $\F_q$ for $i\notin S$ is
$q^n/2^m+O(q^{n-1/2}).$ 
\end{itemize}
\label{thm_independent_events}
\end{thm}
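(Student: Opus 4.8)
The plan is to translate the counting problem in (ii) into the language of quadratic character sums and then into point counts on auxiliary hypersurfaces, where the Lang--Weil bound applies. Write $\chi$ for the quadratic character of $\F_q^*$, extended by $\chi(0)=0$, so that for $a\in\F_q$ the number of square roots of $a$ equals $1+\chi(a)$. For a nonempty subset $T\subseteq\{1,\dots,m\}$ set $g_T=\prod_{i\in T}f_i$ and introduce the affine hypersurface $Y_T=V(y^2-g_T(\underline{x}))\subset\A^{n+1}$, so that $\#Y_T(\F_q)=q^n+S_T$, where $S_T=\sum_{\underline{x}\in\F_q^n}\chi(g_T(\underline{x}))$. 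Everything reduces to deciding when each $S_T$ is small.

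First I would record the combinatorial dictionary between (ii) and the sums $S_T$. For a sign pattern $\varepsilon\in\{\pm1\}^m$ let $N_\varepsilon$ be the number of $\underline{x}$ with every $f_i(\underline{x})\neq0$ and $\chi(f_i(\underline{x}))=\varepsilon_i$; the quantity in (ii) for a subset $S$ is exactly $N_{\varepsilon(S)}$, where $\varepsilon(S)_i=+1$ iff $i\in S$. Since $\chi$ is multiplicative, expanding $S_T=\sum_{\underline{x}}\prod_{i\in T}\chi(f_i(\underline{x}))$ and discarding the $O(q^{n-1})$ points lying on some $\{f_i=0\}$ gives $S_T=\sum_\varepsilon N_\varepsilon\prod_{i\in T}\varepsilon_i+O(q^{n-1})$, while Fourier inversion on $\{\pm1\}^m$ expresses each $N_\varepsilon$ as $2^{-m}\sum_T\bigl(\prod_{i\in T}\varepsilon_i\bigr)S_T+O(q^{n-1})$. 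Using $\sum_\varepsilon\prod_{i\in T}\varepsilon_i=0$ for $T\neq\emptyset$ and $S_\emptyset=q^n$, this shows that (ii) holds for every $S$ if and only if $S_T=O(q^{n-1/2})$ for every nonempty $T$, with implied constants depending only on $n$, $m$, and $\sum_i\deg f_i$.

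The algebraic heart of the argument is to identify when $S_T$ is small. I claim $S_T=O(q^{n-1/2})$ precisely when $Y_T$ is geometrically irreducible, which in turn happens precisely when $g_T$ is not a perfect square in $\overline{\F_q}[\underline{x}]$. Indeed, $y^2-g_T$ is reducible over $\overline{\F_q}(\underline{x})$ iff $g_T$ is a square there, iff (as $\overline{\F_q}[\underline{x}]$ is a UFD) $g_T=u^2$ for some $u\in\overline{\F_q}[\underline{x}]$; when $Y_T$ is geometrically irreducible of dimension $n$, Lang--Weil gives $\#Y_T(\F_q)=q^n+O(q^{n-1/2})$, hence $S_T=O(q^{n-1/2})$. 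Conversely, if $g_T=u^2$ over $\overline{\F_q}$, then factoring $g_T$ into $k$-irreducibles (which are separable, as $k$ is perfect) and comparing multiplicities under $\mathrm{Gal}(\overline{\F_q}/k)$, one descends the square to the base field: $g_T=\lambda h^2$ for some $\lambda\in k^*$ and $h\in k[\underline{x}]$. Then $\chi(g_T(\underline{x}))=\chi(\lambda)$ off $\{h=0\}$, so $S_T=\chi(\lambda)q^n+O(q^{n-1})$ is genuinely large. This descent step — passing from a factorization over $\overline{\F_q}$ to the normal form $\lambda h^2$ over $k$ — is the main point requiring care.

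It remains to match this with condition (i). Because $f_i^{-1}\equiv f_i$ modulo squares, a product $\lambda\prod_i f_i^{\varepsilon_i}$ with $\varepsilon_i\in\{0,1,-1\}$ is a square in $k(\underline{x})$ if and only if $\lambda' g_T$ is a square for $T=\{i:\varepsilon_i\neq0\}$ and some $\lambda'\in k^*$; by the descent above this is equivalent to $g_T$ being a perfect square over $\overline{\F_q}$. Thus (i) says exactly that $g_T$ is a non-square over $\overline{\F_q}$ for every nonempty $T$, i.e. that every $Y_T$ is geometrically irreducible. Combining the three equivalences, namely (i) $\Leftrightarrow$ [all $g_T$ non-squares over $\overline{\F_q}$] $\Leftrightarrow$ [all $S_T=O(q^{n-1/2})$] $\Leftrightarrow$ (ii), completes the proof. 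I expect the only delicate steps to be the Galois descent of the square to the form $\lambda h^2$ and the bookkeeping of the $O(q^{n-1})$ contributions from the loci $\{f_i=0\}$; the remainder is a direct application of Lang--Weil together with the elementary Fourier inversion on $\{\pm1\}^m$.
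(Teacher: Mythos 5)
Your argument is correct in substance but follows a genuinely different route from the paper's. The paper fixes $S$, forms the single $n$-dimensional variety $X=V(\{f_i-s_i^2\}_{i\in S},\{f_i-\nu s_i^2\}_{i\notin S})\subset\A^{n+m}$, proves geometric irreducibility of $X$ by an inductive algebra lemma (Lemma \ref{Lem_integral_domain}: the tower $k[\underline{x},s_1,\dots,s_m]/(f_1-s_1^2,\dots,f_m-s_m^2)$ is a domain), applies Lang--Weil once to $X$, and reads off $N_S=2^{-m}\#\bigl(X\setminus\bigcup V(s_i)\bigr)(\F_q)$. You instead work with the $2^m-1$ double covers $Y_T=V(y^2-g_T)\subset\A^{n+1}$, $g_T=\prod_{i\in T}f_i$, recover every $N_S$ at once from the sums $S_T$ by orthogonality on $\{\pm1\}^m$, and need irreducibility only for hypersurfaces, where it is the one-line condition that $g_T$ not be a square over $\overline{\F_q}$; the inductive lemma disappears. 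Both proofs need the same descent fact: your claim that $g_T$ a square over $\overline{\F_q}$ forces $g_T=\lambda h^2$ with $\lambda\in k^*$, $h\in k[\underline{x}]$, is exactly the paper's Lemma \ref{lemma_k_to_k_bar_condition}, and your Galois-orbit sketch is the standard argument, though it is the one step you should write out in full (each $k$-irreducible stays squarefree over $\overline{k}$ because $k$ is perfect, and distinct $k$-irreducibles stay coprime over $\overline{k}$). What your route buys: it is more elementary (Lang--Weil for hypersurfaces plus character orthogonality), it handles all $S$ simultaneously, and it yields explicit constants readily. What it gives up: for $|T|\geq 2$ the hypersurface $Y_T$ is singular along $V(\{f_i\}_{i\in T})\times\{y=0\}$, in general of dimension $n-2$, no matter how nice the $f_i$ are; so this route cannot be sharpened under smoothness hypotheses, whereas the paper's $X$ is a complete intersection that inherits smoothness from the $f_i$ and their intersections, which is exactly what powers the improved error bound of Theorem \ref{thm_improved_error} via Hooley--Katz. (Your method is in spirit the Asgarli--Yip product-polynomial approach, with Lang--Weil in place of Rojas-Le\'on's singular character sum estimates; the paper's introduction explains why it avoids that route.)

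One small repair: your stated equivalence ``$S_T=O(q^{n-1/2})$ iff $Y_T$ is geometrically irreducible iff $g_T$ is not a perfect square'' fails in the degenerate case $g_T=0$: there $S_T=0$ and $Y_T=V(y^2)$ is irreducible as a variety, yet $g_T$ is a square. So dispose first of the case where some $f_i$ is the zero polynomial --- then both (i) and (ii) fail outright --- and assume all $f_i\neq 0$ thereafter; this assumption is also what justifies your repeated use of $\#\{f_i=0\}(\F_q)=O(q^{n-1})$ in the Fourier dictionary. With that case split and the descent lemma written out, the proof is complete.
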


\begin{Rem}
When $m=2$, condition (i) above is simply that the square-free parts of $f_1$ and $f_2$ are both of degree at least $1$ and are not constant multiples of each other. Also, note that (i) holds whenever $f_1,\dots,f_m$ are irreducible and pairwise non-associate. On the other hand, (i) rules out, for example, a dependence of the form $f_1=g_2g_3$, $f_2=g_1g_3$, $f_3=g_1g_2$. However, in case of such a dependence, if we are interested say in the number of $\underline{x}$ such that all $f_1$, $f_2$, and $f_3$ assume nonzero square values at $\underline{x}$,  we can simply discard $f_3$ and apply the theorem just to the set 
$\{f_1,f_2\}$.  
\label{Rem:basic_cases}
\end{Rem}

\begin{Rem}
Suppose (i) and (ii) hold. The implied constant in the $O$-notation in (ii) depends only on $n$, $m$, and the degrees of $f_1,\dots, f_m$, but not on $q$. In fact, let  $d_i\colonequals\max(\deg(f_i),2)$, and set
$d\colonequals\prod_{i=1}^m d_i$. Let $S\subset\{1,\dots,m\}$, and let 
$N_{S}(f_1,\dots,f_m)$ be the number of points $\underline{x}$ in $\F_q^n$ such that $f_i(\underline{x})$ is a nonzero square in $\F_q$ for $i\in S$ and a non-square in $\F_q$ for $i\notin S$. 
An inspection of the proof of Theorem \ref{thm_independent_events} along with an application of   \cite[Theorem 7.1]{Cafure_Matera} shows that
\[\left|N_{S}(f_1,\dots,f_m)-\frac{q^n}{2^m}\right|\leq \frac{(d-1)(d-2)}{2^m}q^{n-1/2}+C_{d_1,\dots,d_m}q^{n-1}\]
for a constant $C_{d_1,\dots,d_m}$ that can be determined explicitly from the proof of Theorem \ref{thm_independent_events} and the results in \cite{Cafure_Matera}. 
\label{Rem:explicit_bound}
\end{Rem}

\begin{Rem}
In Theorem \ref{thm_independent_events}, we can add condition
(iii): There exists a finite field $k_1\supset k$ 
such that for every $i=1,\dots,m$ there exist 
$\underline{u},\underline{v}$ in $k_1^n$ such that
$f_i(\underline{u})f_i(\underline{v})$ is a non-square in $k_1$ and  $f_j(\underline{u})f_j(\underline{v})$ is a nonzero square in $k_1$ for all $j\neq i$. It is easy to see that (iii)$\implies$ (i) and (ii)$\implies$ (iii); thus, (iii) is also equivalent to (i) and (ii). 
\end{Rem}

\subsection{Improved error bound under additional assumptions}

Specialize from now on to the case when $f_1,\dots,f_m$ belong to the space $k[x_0,\dots,x_n]_2$ of homogeneous polynomials in $k[x_0,\dots,x_n]$ of degree $2$. For $f\in k[x_0,\dots,x_n]_2$ and 
$\underline{x}\in\P^n(\F_q)$, the notion of whether $f(\underline{x})$ is a nonzero square (respectively, a non-square) in $\F_q$ is well-defined. 

Since our second main result --- Theorem \ref{thm_improved_error} --- involves a number of parameters, we first state an illustrative special case. Set $\dim\emptyset=-1$.

\begin{thm} Let $k$ be a finite field with $\fldchar k\neq 2$. 
Let $f_1,f_2\in k[x_0,\dots,x_n]_2$ be irreducible and non-associate. Define
\[\sigma\colonequals \max\left(\dim V(f_1)_{\sing}, \dim V(f_2)_{\sing}, \dim V(f_1,f_2)_{\sing}\right).\]

Let $\F_q\supset k$ be a finite field, and let 
$S\subset\{1,2\}$. Then the number $N_S(f_1,f_2)$ of $\underline{x}$ in $\P^n(\F_q)$ such that $f_i(\underline{x})$ is a nonzero square in $\F_q$ for $i\in S$ and a non-square in $\F_q$ for $i\notin S$ satisfies
\[N_S(f_1,f_2)=\frac{q^n-q^{n-1}}{4}+O(q^{(n+\sigma+1)/2}).\]
\label{thm_improved_error_m_2}
\end{thm}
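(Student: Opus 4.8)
The plan is to rewrite $N_S(f_1,f_2)$ as a combination of point counts on complete intersections of two quadrics and then invoke the Hooley--Katz estimate. Let $\chi$ be the quadratic character of $\F_q^*$, extended by $\chi(0)=0$; since $f_1,f_2$ are homogeneous of even degree, $\chi(f_i(\underline{x}))$ is well defined for $\underline{x}\in\P^n(\F_q)$. Fix a non-square $\nu\in\F_q^*$ and set $c_i=1$ for $i\in S$ and $c_i=\nu$ for $i\notin S$. For $\underline{x}\in\P^n(\F_q)$ the equation $c_it^2=f_i(\underline{x})$ has exactly $1+\chi(c_i^{-1}f_i(\underline{x}))$ solutions $t\in\F_q$, which equals $1+\chi(f_i(\underline{x}))$ for $i\in S$ and $1-\chi(f_i(\underline{x}))$ for $i\notin S$ (using $\chi(\nu)=-1$), and this is correct even when $f_i(\underline{x})=0$. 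Writing the indicator of ``$f_i(\underline{x})$ is a nonzero square (resp.\ a non-square)'' as $\tfrac12\bigl(1\pm\chi(f_i)\bigr)-\tfrac12\mathbf{1}[f_i=0]$ and expanding the product over $i=1,2$, I obtain
\[4N_S(f_1,f_2)=\#X(\F_q)-\#W_1(\F_q)-\#W_2(\F_q)+\#V(f_1,f_2)(\F_q),\]
where $X\subset\P^{n+2}$ is cut out by $f_1-c_1s_1^2$ and $f_2-c_2s_2^2$, and $W_j\subset\P^{n+1}$ is cut out by $f_j$ and $f_{j'}-c_{j'}s_{j'}^2$ (with $\{j,j'\}=\{1,2\}$). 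Here I use that $[\underline{x}:s_1:s_2]\mapsto[\underline{x}]$ is a morphism $X\to\P^n$, since no point of $X$ has $\underline{x}=0$, whose fibre over $\underline{x}$ has exactly $\prod_i\bigl(1+\chi(c_i^{-1}f_i)\bigr)$ points, and similarly for $W_j$.

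Each of $X$, $W_1$, $W_2$, $V(f_1,f_2)$ is a complete intersection of two quadrics, of respective dimensions $n$, $n-1$, $n-1$, $n-2$; here I use that $f_1,f_2$ are non-associate and irreducible so that $V(f_1,f_2)$ has codimension $2$. For a geometrically irreducible complete intersection of dimension $d$, Hooley--Katz gives $\#(\,\cdot\,)(\F_q)=\#\P^d(\F_q)+O\!\left(q^{(d+s+1)/2}\right)$, where $s$ is the dimension of its singular locus. The main terms telescope: since $\#\P^d(\F_q)-\#\P^{d-1}(\F_q)=q^d$,
\[\#\P^n-\#\P^{n-1}-\#\P^{n-1}+\#\P^{n-2}=\bigl(\#\P^n-\#\P^{n-1}\bigr)-\bigl(\#\P^{n-1}-\#\P^{n-2}\bigr)=q^n-q^{n-1},\]
which produces exactly the claimed main term $\tfrac14(q^n-q^{n-1})$.

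It remains to bound the four singular loci by $\sigma$ and to assemble the error. A direct Jacobian computation is the key step. For $X$, a point is singular exactly when the gradients $(\nabla f_1,-2c_1s_1,0)$ and $(\nabla f_2,0,-2c_2s_2)$ are linearly dependent; a short case analysis (according to which of $s_1,s_2$ vanish) shows that the singular locus maps with finite fibres into $V(f_1)_{\sing}\cup V(f_2)_{\sing}\cup V(f_1,f_2)_{\sing}$, so $\dim X_{\sing}\le\sigma$, and likewise $\dim (W_j)_{\sing}\le\sigma$, while $\dim V(f_1,f_2)_{\sing}\le\sigma$ by definition of $\sigma$. Feeding $s\le\sigma$ into the estimates, the dominant error comes from $X$ and is $O\!\left(q^{(n+\sigma+1)/2}\right)$, whereas the contributions of $W_j$ and of $V(f_1,f_2)$ are $O\!\left(q^{(n+\sigma)/2}\right)$ and $O\!\left(q^{(n+\sigma-1)/2}\right)$, hence also absorbed. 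Combining with the main-term computation yields the theorem.

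I expect the main obstacle to be twofold. First, the Jacobian case analysis for $X$ and the $W_j$ must be carried out carefully enough to confirm $\dim(\,\cdot\,)_{\sing}\le\sigma$ rather than the trivial bound $\le n-1$, as this is what drives the sharp exponent. Second, the Hooley--Katz estimate with main term $\#\P^d$ presupposes geometric irreducibility; this is automatic in the essential range $\sigma\le n-2$, because a reducible complete intersection has components meeting in codimension $1$, forcing $\dim(\,\cdot\,)_{\sing}\ge d-1$, while for $\sigma\ge n-1$ the asserted error $q^{(n+\sigma+1)/2}\ge q^n$ already dominates the main term and the statement is trivial. The borderline dimensions, where one of $W_j$ or $V(f_1,f_2)$ might fail to be irreducible, must then be checked to see that the resulting discrepancy in the leading term is still of size $O\!\left(q^{(n+\sigma+1)/2}\right)$.
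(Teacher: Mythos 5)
Your proposal is correct, and its core coincides with the paper's own argument: the paper deduces this theorem from Theorem \ref{thm_improved_error}, whose proof builds exactly your variety $X=V(f_1-c_1s_1^2,\,f_2-c_2s_2^2)\subset\P^{n+2}$, derives your four-term identity (there obtained by inclusion--exclusion for the $4\colon 1$ cover $\varphi\colon X\to\P^n$ over the strata $X\cap V(s_1)$, $X\cap V(s_2)$, $X\cap V(s_1,s_2)$, which are precisely your $W_1$, $W_2$, $V(f_1,f_2)$), bounds every singular locus by $\sigma$ through the same Jacobian case analysis, and finishes with Hooley--Katz; your character-sum derivation of the identity is only a cosmetic variant of that. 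The one genuine divergence is the source of geometric irreducibility, which the Hooley--Katz main terms $\pi_d(q)$ require. The paper gets irreducibility of $X$ from Lemma \ref{Lem_integral_domain} (the tower-of-quadratic-extensions integral-domain argument, fed by condition (i) of Theorem \ref{thm_independent_events}, which holds here by Remark \ref{Rem:basic_cases}), and irreducibility of the strata by an induction using that two $(n-r)$-dimensional components inside $\P^{n+2-r}$ meet in dimension at least $n-2-r$, and meeting points are singular. You instead invoke Hartshorne's connectedness theorem for complete intersections --- components meet in codimension one --- combined with your bound $\dim(\cdot)_{\sing}\le\sigma$, and absorb any borderline reducible cases into the error term. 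This bypasses Lemma \ref{Lem_integral_domain} entirely, at the price of citing the connectedness theorem and of the final bookkeeping you flag; that bookkeeping does close: under the hypotheses one always has $\sigma\le n-2$, so $X$ itself (of dimension $n$) is always geometrically irreducible by your criterion, while in the only problematic ranges the main-term discrepancies are $O(q^{n-1})$ for $W_j$ (possible only when $\sigma=n-2$, where $q^{(n+\sigma+1)/2}=q^{n-1/2}$) and $O(q^{n-2})$ for $V(f_1,f_2)$ (possible only when $\sigma\ge n-3$), both within the stated error. So your route is marginally more self-contained for $m=2$, whereas the paper's algebraic lemma gives irreducibility unconditionally and scales to arbitrary $m$.
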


Consider $f_1,\dots,f_m\in k[x_0,\dots,x_n]_2$ that satisfy the assumption in Theorem \ref{thm_independent_events}(i). 

For any $1\leq r\leq m$ and any $r$-element subset $\{i_1,\dots,i_r\}$ of $\{1,\dots,m\}$, define
\[T(f_{i_1},\dots,f_{i_r})=\{\underline{x}\in V(f_{i_1},\dots,f_{i_r})\ |\ 
\rk\Jac(f_{i_1},\dots,f_{i_r})(\underline{x})<r
\}\]
and let $\sigma_{i_1,\dots,i_r}=\dim T(f_{i_1},\dots,f_{i_r})$. Notice that
\[
\sigma_{i_1,\dots,i_r}=\begin{cases}
\dim V(f_{i_1},\dots,f_{i_r})_{\sing},\quad & \text{if $V(f_{i_1},\dots,f_{i_r})$ has pure dimension $n-r$};\\
\dim V(f_{i_1},\dots,f_{i_r}),\quad &\text{if $\dim V(f_{i_1},\dots,f_{i_r})>n-r$.}
\end{cases}
\]

Set $\sigma=\max(\sigma_{i_1,\dots,i_r}),$ with maximum over all $1\leq r\leq m$ and all $r$-element subsets $\{i_1,\dots,i_r\}$ of $\{1,\dots,m\}$. 
 
Let $l=\max(n-m-\sigma-1,0)$. Note that $l\leq n-1$. Moreover, if $l\geq 1$, then $n-m\geq \sigma+2\geq 1$, so $m\leq n-1$. 

For $i\geq 0$, denote
\[\pi_i(q)=\#\P^i(\F_q)=q^i+\dots+q+1.\]

\begin{thm} Let $k$ be a finite field with $\fldchar k\neq 2$. Let $f_1,\dots,f_m\in k[x_0,\dots,x_n]_2$ satisfy the assumption in Theorem \ref{thm_independent_events}(i). 
Define $\sigma$ and $l$ as above. Suppose that for any $1\leq r\leq\min(l+1,m)$ and any $r$-element subset $\{f_{i_1},\dots,f_{i_r}\}$ of $\{1,\dots,m\}$, we have $V(f_{i_1},\dots,f_{i_{r-1}})\not\subset V(f_{i_r})$.

Let $\F_q\supset k$ be a finite field, and let 
$S\subset\{1,\dots,m\}$. Then the number $N_S(f_1,\dots,f_m)$ of $\underline{x}$ in $\P^n(\F_q)$ such that $f_i(\underline{x})$ is a nonzero square in $\F_q$ for $i\in S$ and a non-square in $\F_q$ for $i\notin S$ satisfies
\begin{equation}
N_S(f_1,\dots,f_m)=\frac{1}{2^m}\sum_{0\leq r\leq \min(m,l)}(-1)^r\binom{m}{r}\pi_{n-r}(q)+O(q^{\gamma}),
\label{eq:main_thm_formula_error}
\end{equation}
where
\[\gamma=\max\left(\frac{n+\sigma+1}{2},n-l-1\right).\]
\label{thm_improved_error}
\end{thm}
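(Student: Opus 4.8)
The plan is to express $N_S$ through point counts on a whole family of complete intersections and then recombine them by inclusion--exclusion. Fix a non-square $\nu\in\F_q$, set $c_i=1$ for $i\in S$ and $c_i=\nu$ for $i\notin S$, and put $\varepsilon_i=+1$ for $i\in S$, $\varepsilon_i=-1$ for $i\notin S$; let $\chi$ be the quadratic character of $\F_q^{*}$ extended by $\chi(0)=0$, so $t^2=a$ has $1+\chi(a)$ solutions. For each subset $J\subseteq\{1,\dots,m\}$ I would introduce auxiliary variables $s_i$ only for $i\notin J$ and form
\[
X_J=V\bigl(\{f_j\}_{j\in J}\cup\{f_i-c_is_i^2\}_{i\notin J}\bigr)\subset\P^{\,n+m-|J|},
\]
cut out by $m$ quadrics. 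Because each $f_i$ is homogeneous of degree $2$, no point of $X_J$ can have $\underline{x}=0$, so projection to $\P^n$ is a morphism whose fibre over $[\underline{x}]\in V(\{f_j\}_{j\in J})$ (computed with any representative of $\underline{x}$, using $\chi(c_i)=\varepsilon_i$) has exactly $\prod_{i\notin J}\bigl(1+\varepsilon_i\chi(f_i(\underline{x}))\bigr)$ points. Summing over the base gives $\#X_J(\F_q)=\sum_{[\underline{x}]\in V(\{f_j\}_{j\in J})(\F_q)}\prod_{i\notin J}\bigl(1+\varepsilon_i\chi(f_i(\underline{x}))\bigr)$. Expanding the indicator $\mathbf 1[f_i(\underline{x})\neq 0\ \forall i]=\prod_i\bigl(1-\mathbf 1[f_i(\underline{x})=0]\bigr)$ and noting that a point lies in the $N_S$-set exactly when all $f_i(\underline{x})\neq 0$ and every factor $1+\varepsilon_i\chi(f_i(\underline{x}))$ equals $2$, I obtain the key identity
\[
2^m\,N_S(f_1,\dots,f_m)=\sum_{J\subseteq\{1,\dots,m\}}(-1)^{|J|}\,\#X_J(\F_q).
\]

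Next I would analyze the geometry of each $X_J$. Using the hypothesis $V(f_{i_1},\dots,f_{i_{r-1}})\not\subset V(f_{i_r})$ for $r\le\min(l+1,m)$, an induction shows that for $|J|\le\min(l+1,m)$ the intersection $V(\{f_j\}_{j\in J})\subset\P^n$ is proper of dimension $n-|J|$; since square roots exist over $\overline{\F_q}$, the finite projection $X_J\to V(\{f_j\}_{j\in J})$ is surjective, so $X_J$ is a complete intersection of dimension $n-|J|$. The crucial computation is the singular locus. In the Jacobian of the defining quadrics the column $\partial/\partial s_i$ ($i\notin J$) is $-2c_is_i$ times a standard basis vector, hence independent whenever $s_i\neq0$. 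Thus at a point $[\underline{x}:\underline{s}]\in X_J$, setting $A=J\cup\{i\notin J:s_i=0\}$, the Jacobian has rank $(m-|A|)+\rk\Jac((f_a)_{a\in A})(\underline{x})$, so the point is singular only if $\underline{x}\in T(\{f_a\}_{a\in A})$. As the fibres of $X_J\to\P^n$ are finite, this yields $\dim (X_J)_{\sing}\le\max_{A\supseteq J}\sigma_A\le\sigma$.

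Finally I would apply the Hooley--Katz bound to each $X_J$ with $|J|\le l$: as a complete intersection of dimension $n-|J|\ge1$ with $\dim(X_J)_{\sing}\le\sigma$, it satisfies $\#X_J(\F_q)=\pi_{n-|J|}(q)+O(q^{(n-|J|+\sigma+1)/2})$, the error being largest at $J=\emptyset$ and equal to $O(q^{(n+\sigma+1)/2})$. For $|J|=r\ge l+1$ I would discard the main term altogether: choosing $J'\subseteq J$ with $|J'|=l+1$ gives $\dim V(\{f_j\}_{j\in J})\le\dim V(\{f_j\}_{j\in J'})=n-l-1$, whence the crude bound $\#X_J(\F_q)\le 2^{\,m-r}\#V(\{f_j\}_{j\in J})(\F_q)=O(q^{\,n-l-1})$. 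Substituting into the identity above, grouping by $r=|J|$ with $\binom{m}{r}$ subsets of size $r$, and dividing by $2^m$ produces the truncated main term $\frac{1}{2^m}\sum_{0\le r\le\min(m,l)}(-1)^r\binom{m}{r}\pi_{n-r}(q)$ together with the two error contributions $O(q^{(n+\sigma+1)/2})$ and $O(q^{\,n-l-1})$, i.e.\ $O(q^{\gamma})$.

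The main obstacle will be the singular-locus analysis of $X_J$ and the attendant dimension counts. The Jacobian bookkeeping must be carried out uniformly in $J$ and must interface correctly with both alternatives in the definition of $\sigma_{i_1,\dots,i_r}$ (proper versus excess-dimensional intersections), and one must verify that the chain hypothesis is invoked at exactly the levels $r\le\min(l+1,m)$ needed to control both the main terms ($r\le l$) and the crude tail bound ($r=l+1$). Condition~(i) of Theorem~\ref{thm_independent_events} enters, as in that theorem, to guarantee that the complete intersections $X_J$ are reduced and geometrically irreducible (in particular that no $f_i$ is a square and the character values are genuinely balanced), which is what legitimizes the $\pi_{n-|J|}(q)$ main terms.
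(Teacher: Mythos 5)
Your architecture is the paper's own: the varieties $X_J$ are exactly the paper's $X_{i_1,\dots,i_r}=X\cap V(s_{i_1},\dots,s_{i_r})$ viewed inside $\P^{n+m-|J|}$, your character-sum identity $2^m N_S=\sum_J(-1)^{|J|}\#X_J(\F_q)$ is precisely its inclusion--exclusion for $\#\bigl(X\setminus\bigcup_i V(s_i)\bigr)(\F_q)$, and the Jacobian bookkeeping, the Hooley--Katz input, and the tail estimate all match. The genuine gap is at the step you yourself flag as the main obstacle: the dimension count. The hypothesis $V(f_{i_1},\dots,f_{i_{r-1}})\not\subset V(f_{i_r})$ only forbids the \emph{whole} variety from lying in $V(f_{i_r})$; it does not forbid an irreducible \emph{component} from doing so, and along such a component the dimension does not drop. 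This is a live possibility here, because condition (i) of Theorem \ref{thm_independent_events} does not force the $f_i$ to be irreducible (a product of two distinct linear forms is allowed), so the intermediate intersections can be reducible; hence your induction ``non-containment implies the dimension drops by one'' is invalid as stated. The paper is structured precisely to avoid this: its induction runs on the $X_{i_1,\dots,i_r}$ themselves and interleaves (1) purity of dimension, (2) the singular-locus bound, (3) geometric irreducibility, so that irreducibility at step $r-1$ is what converts non-containment into a genuine codimension-one cut at step $r$, while irreducibility at step $r$ is recovered from (1)--(2) by the component-intersection count: two components $Y,Z$ would satisfy $\dim(Y\cap Z)\ge n-m-r\ge\sigma+1$ for $r\le l$, yet $Y\cap Z\subset (X_{i_1,\dots,i_r})_{\sing}$. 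Your proposal contains no substitute for step (3).

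Relatedly, your closing claim that condition (i) ``guarantees that the complete intersections $X_J$ are reduced and geometrically irreducible'' is false for $J\neq\emptyset$: Lemma \ref{Lem_integral_domain} applies only to $X_\emptyset$. For example, take $f_2=f_1+g^2$ with $g$ a linear form and $f_1,f_2$ irreducible and non-associate (so (i) holds by Remark \ref{Rem:basic_cases}); then $X_{\{1\}}=V(f_1,\,f_2-c_2s_2^2)=V(f_1,\,g^2-c_2s_2^2)$ splits over $\overline{\F_q}$ into the two components $g=\pm\sqrt{c_2}\,s_2$. Both defects are repairable, but by arguments you did not give. For the dimensions: if some $V(f_{i_1},\dots,f_{i_r})$ with $r\le m$ had a component $W$ of dimension $>n-r$, the Jacobian would have rank $\le n-\dim W<r$ along $W$, so $W\subset T(f_{i_1},\dots,f_{i_r})$ and $\sigma\ge n-r+1$, forcing $n-m-\sigma-1\le r-m-2<0$, i.e.\ $l=0$; thus for $l\ge1$ every intersection you use is automatically proper, while for $l=0$ you need only $\dim V(f_i)=n-1$, which follows from $f_i\neq0$. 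For irreducibility: once purity and $\dim(X_J)_{\sing}\le\sigma$ are in place, the paper's component-intersection argument supplies it for $|J|\le l$. As written, however, the two geometric pillars of your outline rest respectively on an invalid induction and on a misattribution.
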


\subsection{An application to mutual position of quadrics} 

Theorem \ref{thm_independent_events} and Remarks \ref{Rem:basic_cases}, \ref{Rem:explicit_bound}, combined with 
\cite[Lemma 2.4]{Asgarli_Yip}
imply

\begin{Cor} Let $C$ and $D$ be distinct smooth quadrics in $\P^n$ (with $n$ even). Then the number $N$ of points 
$\underline{x}\in\P^n(\F_q)$ external to $C$ but internal to $D$ satisfies
\[\left|N-\frac{q^n}{4}\right|\leq \frac{3}{2}q^{n-1/2}+Cq^{n-1},\]
with an effectively computable constant $C$ depending only on $n$. 
\label{Cor:smooth_quadrics}
\end{Cor}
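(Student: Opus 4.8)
The plan is to reduce the geometric statement about external and internal points to the square/non-square counting problem already settled by Theorem~\ref{thm_independent_events} and Remark~\ref{Rem:explicit_bound}, and then to pass from an affine count to the desired projective count. First I would invoke \cite[Lemma 2.4]{Asgarli_Yip}: writing $C=V(f_1)$ and $D=V(f_2)$ with $f_1,f_2\in\F_q[x_0,\dots,x_n]_2$, there exist nonzero constants $\lambda_1,\lambda_2\in\F_q^*$ such that a point $P\in\P^n(\F_q)$ off $C$ is external to $C$ precisely when $(\lambda_1 f_1)(P)$ is a nonzero square in $\F_q$, while a point off $D$ is internal to $D$ precisely when $(\lambda_2 f_2)(P)$ is a non-square. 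Setting $g_1=\lambda_1 f_1$ and $g_2=\lambda_2 f_2$, the quantity $N$ becomes the number of $P\in\P^n(\F_q)$ at which $g_1$ takes a nonzero square value and $g_2$ takes a non-square value.

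Next I would check that $g_1,g_2$ satisfy condition~(i) of Theorem~\ref{thm_independent_events}, taking the base field to be $k=\F_q$. Since $C$ and $D$ are smooth, $f_1$ and $f_2$ are nondegenerate quadratic forms in the $n+1\geq 3$ variables $x_0,\dots,x_n$ (here $n$ is even, so $n\geq 2$); such a form cannot split into two linear factors, so it is absolutely irreducible and in particular irreducible over $\F_q$, a property preserved by scaling with $\lambda_i\in\F_q^*$. Because $C\neq D$, the forms $f_1,f_2$ are not scalar multiples of one another, and neither are $g_1,g_2$ (as $\lambda_1 f_1=\mu\lambda_2 f_2$ would force $f_1,f_2$ associate). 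By Remark~\ref{Rem:basic_cases}, condition~(i) therefore holds for the pair $g_1,g_2$, hence so does condition~(ii).

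The key bridge is the affine-to-projective dictionary. Replacing a representative $\underline{x}\mapsto t\underline{x}$ with $t\in\F_q^*$ multiplies each $g_i(\underline{x})$ by the square $t^2$, so ``$g_1$ a nonzero square and $g_2$ a non-square'' is a well-defined condition on $\P^n(\F_q)$, and every projective point satisfying it has exactly $q-1$ affine lifts in $\F_q^{n+1}\setminus\{0\}$, while the origin contributes nothing. Hence, if $N^{\mathrm{aff}}$ denotes the number of $\underline{x}\in\F_q^{n+1}$ with $g_1(\underline{x})$ a nonzero square and $g_2(\underline{x})$ a non-square, then $N^{\mathrm{aff}}=(q-1)N$. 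Applying Remark~\ref{Rem:explicit_bound} with $m=2$, number of variables $n+1$, and $d_1=d_2=2$ (so $d=4$ and $(d-1)(d-2)=6$), and using that its implied constant is uniform in $q$, I obtain
\[
\left|N^{\mathrm{aff}}-\frac{q^{n+1}}{4}\right|\leq \frac{3}{2}\,q^{n+1/2}+C\,q^{n}
\]
for an effectively computable $C=C(n)$.

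Finally I would divide by $q-1$ and track the main term. Writing $N^{\mathrm{aff}}=\tfrac14 q^{n+1}+E$ with $|E|\leq\tfrac32 q^{n+1/2}+Cq^n$, one computes
\[
N-\frac{q^n}{4}=\frac{1}{q-1}\left(\frac{q^{n+1}}{4}+E\right)-\frac{q^n}{4}=\frac{q^n}{4(q-1)}+\frac{E}{q-1},
\]
so that the leading error contribution $\tfrac32 q^{n+1/2}/(q-1)=\tfrac32 q^{n-1/2}+O(q^{n-3/2})$ retains the constant $\tfrac32$ intact, while every remaining term --- the main-term defect $q^n/(4(q-1))$, the $q/(q-1)$ corrections, and $Cq^n/(q-1)$ --- is $O(q^{n-1})$. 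Collecting these yields $|N-q^n/4|\leq\tfrac32 q^{n-1/2}+C'q^{n-1}$ with $C'$ depending only on $n$. The step requiring the most care is exactly this final bookkeeping: one must verify that passing from $\F_q^{n+1}$ to $\P^n(\F_q)$ does not corrupt the leading coefficient $3/2$ and that the discrepancy between $q^{n+1}/(4(q-1))$ and $q^n/4$ is genuinely absorbed into the $q^{n-1}$ error.
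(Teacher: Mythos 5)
Your proof is correct and takes essentially the same route the paper intends: the paper derives this corollary precisely by combining \cite[Lemma 2.4]{Asgarli_Yip} with Theorem \ref{thm_independent_events}, Remark \ref{Rem:basic_cases} (to verify condition (i) for the irreducible, non-associate forms), and Remark \ref{Rem:explicit_bound} (giving the constant $\frac{(4-1)(4-2)}{2^2}=\frac{3}{2}$), which are exactly the ingredients you assemble. The affine-to-projective passage --- dividing by $q-1$ and checking that the coefficient $3/2$ of $q^{n-1/2}$ survives while all other discrepancies are absorbed into $O(q^{n-1})$ --- is the one detail the paper leaves implicit, and you carry it out correctly.
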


Here the factor $3/2$ of $q^{n-1/2}$ improves the $3.8^{n+1}$ in \cite{Asgarli_Yip} (see lemmas 3.1 and 3.2 in \cite{Asgarli_Yip}). 

The improved error bound in Theorem \ref{thm_improved_error_m_2} gives

\begin{Cor}
Let $C$ and $D$ be distinct smooth quadrics in $\P^n$ (with $n$ even). Let $\sigma\colonequals\dim(C\cap D)_{\sing}$.  Then the number of points 
$\underline{x}\in\P^n(\F_q)$ external to $C$ but internal to $D$ equals
$(q^n-q^{n-1})/4+O(q^{(n+\sigma+1)/2})$.
\end{Cor}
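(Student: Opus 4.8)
The plan is to deduce this directly from Theorem \ref{thm_improved_error_m_2}, in exactly the way Corollary \ref{Cor:smooth_quadrics} is deduced from Theorem \ref{thm_independent_events}. The one geometric input is the external/internal criterion of Asgarli--Yip \cite[Lemma 2.4]{Asgarli_Yip}: writing $C=\{f_1=0\}$ and $D=\{f_2=0\}$, there are nonzero scalars $c_1,c_2\in k^*$, intrinsic to the two quadrics, such that a point $P\in\P^n(\F_q)$ not on $C$ is external to $C$ precisely when $c_1 f_1(P)$ is a square in $\F_q$, and a point not on $D$ is internal to $D$ precisely when $c_2 f_2(P)$ is a non-square in $\F_q$. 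A point lying on $C$ has $f_1(P)=0$, so that $c_1 f_1(P)$ is not a nonzero square, and symmetrically for $D$; hence the set of points external to $C$ and internal to $D$ is exactly the set counted by $N_{\{1\}}(c_1 f_1, c_2 f_2)$ in the notation of Theorem \ref{thm_improved_error_m_2}, taking $S=\{1\}$.

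First I would check that $\tilde f_1\colonequals c_1 f_1$ and $\tilde f_2\colonequals c_2 f_2$ satisfy the hypotheses of Theorem \ref{thm_improved_error_m_2}. They are homogeneous of degree $2$ with coefficients in $k$, and $\fldchar k\neq 2$. A smooth quadric hypersurface in $\P^n$ with $n\geq 2$ is geometrically irreducible, so in particular $\tilde f_1,\tilde f_2$ are irreducible; multiplication by the nonzero scalar $c_i$ changes neither irreducibility nor the vanishing locus. Finally $\tilde f_1$ and $\tilde f_2$ are non-associate because $C\neq D$.

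Next I would identify the parameter $\sigma$ appearing in Theorem \ref{thm_improved_error_m_2} with the quantity $\dim(C\cap D)_{\sing}$ in the present statement. Since scaling by $c_i$ does not change the vanishing locus, $V(\tilde f_1)=C$ and $V(\tilde f_2)=D$ are smooth, so $\dim V(\tilde f_1)_{\sing}=\dim V(\tilde f_2)_{\sing}=-1$, while $V(\tilde f_1,\tilde f_2)=C\cap D$, whence $\dim V(\tilde f_1,\tilde f_2)_{\sing}=\dim(C\cap D)_{\sing}=\sigma$. Therefore the parameter of Theorem \ref{thm_improved_error_m_2} equals $\max(-1,-1,\sigma)=\sigma$, and substituting $S=\{1\}$ into that theorem yields
\[
N_{\{1\}}(\tilde f_1,\tilde f_2)=\frac{q^n-q^{n-1}}{4}+O\!\left(q^{(n+\sigma+1)/2}\right),
\]
which is the claimed estimate.

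The deduction is essentially bookkeeping once the criterion of Asgarli--Yip is in hand; the only point that requires a moment's care is that the scalars $c_i$ be taken intrinsically in $k^*$, so that $c_i f_i\in k[x_0,\dots,x_n]_2$ and the criterion holds uniformly over all $\F_q\supset k$, allowing a single application of the theorem. I do not anticipate a genuine obstacle beyond verifying this and the irreducibility of smooth quadrics noted above.
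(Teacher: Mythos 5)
Your deduction is correct and is exactly the paper's route: the paper obtains this corollary by combining Theorem \ref{thm_improved_error_m_2} (with $S=\{1\}$) with the external/internal criterion of Asgarli--Yip \cite[Lemma 2.4]{Asgarli_Yip}, just as Corollary \ref{Cor:smooth_quadrics} is obtained from Theorem \ref{thm_independent_events}. Your verifications (irreducibility and non-associateness of the scaled forms, the identification $\max(-1,-1,\dim(C\cap D)_{\sing})=\sigma$, and the exclusion of points lying on $C$ or $D$) are precisely the bookkeeping the paper leaves implicit.
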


\section{Proofs of the results.} 

The proof of Theorem \ref{thm_independent_events} is based on the following

\begin{Lem} Let $k$ be a field, and let $f_1,\dots,f_m\in k[x_1,\dots,x_n]$. Suppose that for 
$\varepsilon_1,\dots,\varepsilon_m\in\{0,1,-1\}$, the product $\prod_{i=1}^m f_i^{\varepsilon_i}$ is a square in $k(x_1,\dots,x_n)$ only when $\varepsilon_1=\dots=\varepsilon_m=0$. Then the $k[x_1,\dots,x_n]$-algebra
\[k[x_1,\dots,x_n,s_1,\dots,s_m]/(f_1-s_1^2,\dots,f_m-s_m^2)\]
is an integral domain. 
\label{Lem_integral_domain}
\end{Lem}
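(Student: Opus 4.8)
The plan is to pass to the fraction field $K\colonequals\Frac(R)=k(x_1,\dots,x_n)$ of $R\colonequals k[x_1,\dots,x_n]$ and prove that
\[A\otimes_R K=K[s_1,\dots,s_m]/(s_1^2-f_1,\dots,s_m^2-f_m)\]
is a \emph{field}. The lemma then follows formally: $A$ is free over the domain $R$, with the $2^m$ square-free monomials in $s_1,\dots,s_m$ as an $R$-basis (each relation lets one rewrite $s_i^2$ as $f_i\in R$), so $A$ is torsion-free over $R$ and the localization map $A\to A\otimes_R K$ is injective; a subring of a field is a domain. We may assume every $f_i\neq 0$, since $f_i=0$ would make $f_i$ (take $\varepsilon_i=1$) the square $0^2$, contradicting the hypothesis. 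First I would record that the hypothesis is precisely $\F_2$-linear independence of the classes of $f_1,\dots,f_m$ in the group $K^*/(K^*)^2$: because $[f_i^{-1}]=[f_i]$ in this $\F_2$-vector space, a square relation with exponents in $\{0,1,-1\}$ is equivalent to one with exponents in $\{0,1\}$, i.e.\ to the existence of a nonempty $I\subseteq\{1,\dots,m\}$ with $\prod_{i\in I}f_i\in(K^*)^2$.

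Next I would argue by induction on $j$ that $L_j\colonequals K[s_1,\dots,s_j]/(s_i^2-f_i)$ is a field, equal to $K(\sqrt{f_1},\dots,\sqrt{f_j})$, and (using $\fldchar k\neq 2$, so that each $s_i^2-f_i$ is separable) Galois over $K$ of degree $2^j$, with $\operatorname{Gal}(L_j/K)\cong(\Z/2)^j$ generated by automorphisms $\sigma_l$ determined by $\sigma_l(\sqrt{f_i})=(-1)^{\delta_{il}}\sqrt{f_i}$. The case $j=m$ is exactly the statement that $A\otimes_R K$ is a field. Since $L_{j+1}=L_j[s_{j+1}]/(s_{j+1}^2-f_{j+1})$ is a field of degree $2$ over $L_j$ carrying the asserted Galois structure exactly when $f_{j+1}$ is not a square in $L_j$, the inductive step reduces to showing $f_{j+1}\notin(L_j)^2$.

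The hard part is the structural claim that drives the step: every $w\in L_j$ with $w^2\in K$ has the form $w=c\prod_{i\in I}\sqrt{f_i}$ for some $c\in K$ and some $I\subseteq\{1,\dots,j\}$. Granting it, if $f_{j+1}=w^2$ then $f_{j+1}=c^2\prod_{i\in I}f_i$, so $f_{j+1}\prod_{i\in I}f_i=\bigl(c\prod_{i\in I}f_i\bigr)^2\in(K^*)^2$ is a nontrivial square relation (the exponent of $f_{j+1}$ equals $1$), contradicting the reformulated hypothesis; hence $f_{j+1}$ is not a square in $L_j$, and the induction proceeds. To prove the claim I would expand $w=\sum_{I}c_I\prod_{i\in I}\sqrt{f_i}$ in the $K$-basis of $L_j$. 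Since $w^2\in K$ is fixed by each $\sigma_l$, we get $\sigma_l(w)^2=w^2$, whence $\bigl(\sigma_l(w)-w\bigr)\bigl(\sigma_l(w)+w\bigr)=0$ in the field $L_j$, so $\sigma_l(w)=\pm w$ (here $\fldchar k\neq 2$ is essential, as $+w$ and $-w$ must differ). Comparing the basis expansion of $\sigma_l(w)=\sum_I(-1)^{\lvert I\cap\{l\}\rvert}c_I\prod_{i\in I}\sqrt{f_i}$ with that of $\pm w$ forces, for each $l$, all surviving multi-indices $I$ to have the same membership of $l$; intersecting these constraints over all $l$ leaves a single multi-index $I_0$, giving $w=c_{I_0}\prod_{i\in I_0}\sqrt{f_i}$.

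The main obstacle is thus the simultaneous inductive bookkeeping of the full multiquadratic Galois structure together with this monomial description of square roots; this is the Kummer-theoretic fact that $K(\sqrt{f_1},\dots,\sqrt{f_m})$ has degree $2^m$ over $K$ iff $f_1,\dots,f_m$ are independent modulo squares, which one could alternatively invoke as a known result and then transfer back to $A$ by the freeness argument of the first paragraph. I would emphasize that $\fldchar k\neq 2$ cannot be dropped: over $\F_2(x)$ the polynomials $f_1=x$ and $f_2=x+1$ are independent modulo squares, yet $x+1=(\sqrt{x}+1)^2$, so the corresponding algebra is non-reduced and the conclusion fails.
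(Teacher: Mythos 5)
Your proof is correct once the hypothesis $\fldchar k\neq 2$ is added, and it takes a genuinely different route from the paper's. The paper argues directly at the level of rings: by induction it shows each $R_i=k[\underline{x},s_1,\dots,s_i]/(s_1^2-f_1,\dots,s_i^2-f_i)$ is a domain via an explicit zero-divisor computation in $R_i\simeq R_{i-1}\oplus R_{i-1}s_i$, while propagating the invariant that no product $\prod_{j>i}f_j^{\varepsilon_j}$ is a square in $\Frac(R_i)$; that invariant is exactly what allows the next square root to be adjoined. You instead localize at once: freeness of $A$ over $R=k[x_1,\dots,x_n]$ (with basis the square-free monomials in the $s_i$) gives torsion-freeness, hence an embedding $A\into A\otimes_R K$, and the problem becomes the purely field-theoretic fact that $K(\sqrt{f_1},\dots,\sqrt{f_m})$ is a field of degree $2^m$ when the $f_i$ are independent in $K^*/(K^*)^2$ --- exponent-$2$ Kummer theory. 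Where the paper deduces non-squareness of the remaining $f_j$ from a two-line computation with $u,u_1$ in $F_{i-1}\oplus F_{i-1}s_i$, you prove the stronger structural claim that every element of $L_j$ whose square lies in $K$ is a constant times a monomial in the $\sqrt{f_i}$, using the Galois action. Both inductions are sound; yours carries a heavier invariant (the full multiquadratic Galois structure) but identifies the lemma as a citable standard fact and cleanly separates the ring-theoretic reduction from the field theory, while the paper's is more elementary and self-contained (no Galois theory, and the domain property is obtained en route rather than transferred back at the end).

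Your closing remark is moreover a genuine catch: as stated, the lemma assumes only that $k$ is a field, and it is false in characteristic $2$. Your example works: over $k=\F_2$ (with $n=1$, $m=2$), the polynomials $f_1=x$ and $f_2=x+1$ satisfy the hypothesis, since none of $x^{\pm 1}$, $(x+1)^{\pm 1}$, $(x(x+1))^{\pm 1}$, $(x/(x+1))^{\pm 1}$ has all prime multiplicities even; yet in the quotient ring one has $(s_1+s_2+1)^2=s_1^2+s_2^2+1=x+(x+1)+1=0$, so the ring is not even reduced. The paper's own proof also uses $\fldchar k\neq 2$ implicitly: the deduction ``$u=0$ or $u_1=0$'' comes from the vanishing of the $s_i$-coefficient $2uu_1/(vv_1)$, which forces $uu_1=0$ only when $2\neq 0$ in $k$. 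None of the paper's applications are affected, since the lemma is only ever invoked for $\overline{\F_q}$ with $q$ odd, but the hypothesis $\fldchar k\neq 2$ should be added to the lemma's statement.
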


\begin{proof}
Set $\underline{x}=(x_1,\dots,x_n)$ for brevity. Let $R_0=k[\underline{x}]$. For $i=1,\dots,m$, define
\[R_i\colonequals k[\underline{x},s_1,\dots,s_i]/(s_1^2-f_1(\underline{x}),\dots,s_i^2-f_i(\underline{x})).\]

We prove by induction on $i\in\{0,\dots,m\}$ that $R_i$ is an integral domain and for every $\varepsilon_{i+1},\dots,\varepsilon_m\in\{0, 1,-1\}$,
\[\text{if}\quad\prod_{j=i+1}^m f_j^{\varepsilon_j}\quad\text{is a square in $\Frac(R_i)$, then}\
 \varepsilon_{i+1}=\dots=\varepsilon_m=0.\] 

This clearly holds for $i=0$. Suppose that $1\leq i\leq m$ and the statement above holds for $i-1$. 
Note that
\[R_i\simeq R_{i-1}\oplus R_{i-1}s_i\qquad\text{as $R_{i-1}$-modules}.\]  
Suppose that $(a+bs_i)(c+ds_i)=0$ in $R_i$, with $a,b,c,d\in R_{i-1}$, $(a,b)\neq (0,0)$, $(c,d)\neq (0,0)$. Then $ac+bdf_i=0$ and $ad+bc=0$ in $R_{i-1}$. Therefore 
$c^2-d^2f_i=0$. Then $f_i$ would be a square in $\Frac(R_{i-1})$, contrary to the inductive hypothesis. Thus $R_i$ is an integral domain. 

Since $f_i$ is not a square in 
$F_{i-1}\colonequals\Frac(R_{i-1})$, the $F_{i-1}$-algebra
$F_i\colonequals F_{i-1}[s_i]/(s_i^2-f_i)$ is a field; the map $R_i\to F_i$ then identifies $F_i$ with $\Frac(R_i)$. Note that $F_i\simeq F_{i-1}\oplus F_{i-1}s_i$ as $F_{i-1}$-vector spaces.

Finally, suppose that for some $\varepsilon_{i+1},\dots,\varepsilon_m\in\{0,1,-1\}$, we have that $\prod_{j=i+1}^m f_j^{\varepsilon_j}$ is a square in $\Frac(R_i)$. Then we can write 
\[\prod_{j=i+1}^m f_j^{\varepsilon_j}=\left(\frac{u}{v}+\frac{u_1}{v_1}s_i\right)^2=\frac{u^2}{v^2}+\frac{u_1^2}{v_1^2}f_i+2\frac{uu_1}{vv_1}s_i\quad\text{for some $u,v,u_1,v_1\in R_{i-1}$ with $v,v_1\neq 0$.}\]
Since LHS belongs to $F_{i-1}$, we deduce $u=0$ or $u_1=0$. 
If $u=0$, then with $\varepsilon_i=-1$, we would have that $\prod_{j=i}^m f_j^{\varepsilon_j}=(u_1/v_1)^2$ is a square in $F_{i-1}$, which contradicts the inductive hypothesis. Therefore in fact
$u_1=0$. Now $\prod_{j=i+1}^m f_j^{\varepsilon_j}=(u/v)^2$ is a square in $\Frac(R_{i-1})$; hence, indeed, $\varepsilon_{i+1}=\dots=\varepsilon_m=0$ by the inductive hypothesis. 
\end{proof}

\begin{proof}[Proof of Theorem \ref{thm_independent_events}] We first prove (ii)$\implies$(i). Suppose (ii) holds. Suppose $\lambda\prod_{i=1}^m f_i^{\varepsilon_i}$ is a square in $k(x_1,\dots,x_n)$ for some $\varepsilon_i\in\{0,1,-1\}$ and some $\lambda\in k^*$. Suppose that $\varepsilon_i\neq 0$ for some $i$. 

Using (ii), we can find a finite field $\F_q\supset k$ such that $\lambda$ is a square in $\F_q$ and such that there exists an $\underline{x}\in\F_q^n$ such that $f_i(\underline{x})$ is a non-square in $\F_q$ while $f_j(\underline{x})$ is a nonzero square in $\F_q$ for each $j\neq i$. Then $\lambda\prod_{j=1}^m f_j(\underline{x})^{\varepsilon_j}$ would be a non-square in 
$\F_q$, which is a contradiction. 

We now prove (i)$\implies$(ii). Suppose that (i) holds.
Then $f_i\neq 0$ for each $i=1,\dots,m$. 
Let $\F_q\supset k$ be a finite field, and let 
$S\subset\{1,\dots,m\}$. 

Pick a non-square $\nu\in\F_q$. For $i=1,\dots,m$, consider the polynomial $g_i$ in $\F_q[\underline{x},s_1,\dots,s_m]$ defined by
\[
g_i=
\begin{cases}
f_i(\underline{x})-s_i^2\quad &\text{if}\ i\in S;\\
f_i(\underline{x})-\nu s_i^2\quad &\text{if}\ i\notin S.
\end{cases}
\]
Consider the $\F_q$-variety \[X\colonequals \{g_1=0,\dots,g_m=0\}\subset\A^n_{x_1,\dots,x_n}\times\A^m_{s_1,\dots,s_m};\]
it comes with a projection $\varphi\colon X\to \A^n_{x_1,\dots x_n}$. Note that $\varphi$ is finite and surjective. In particular, $\dim X=n$.

We claim that $X$ is geometrically irreducible. Over $\overline{\F_q}$, note that $X$ becomes isomorphic to 
$\{f_1(\underline{x})-s_1^2=0,\dots,f_m(\underline{x})-s_m^2=0\}$. Thus, we have to establish that the $\overline{\F_q}$-algebra
\[\overline{\F_q}[\underline{x},s_1,\dots,s_m]/(f_1(\underline{x})-s_1^2,\dots,f_m(\underline{x})-s_m^2)\]
is an integral domain. We check that the hypothesis in 
Lemma \ref{Lem_integral_domain} is satisfied for the field $\overline{\F_q}$ and the polynomials $f_1,\dots,f_m$. Indeed, suppose that $\beta\colonequals\prod f_i^{\varepsilon_i}$ is a square in $\overline{\F_q}(x_1,\dots,x_n$) for some $\varepsilon_1,\dots,\varepsilon_m\in\{0,1,-1\}$. By Lemma \ref{lemma_k_to_k_bar_condition} below, there exists a
$\lambda\in k^*$ such that $\lambda\beta$ is a square in $k(x_1,\dots,x_n)$. But then (i) implies $\varepsilon_1=\dots=\varepsilon_m=0$. 

The restriction of $\varphi$ to $X-\cup_{i=1}^m V(s_i)$ is $2^m:1$ on $\F_q$-points. Therefore the number $N_S(f_1,\dots,f_m)$ of $\underline{x}\in\F_q^n$ such that $f_i(\underline{x})$ is a nonzero square in $\F_q$ for $i\in S$ and a non-square in $\F_q$ for $i\notin S$ satisfies
\[
N_S(f_1,\dots,f_m) =\frac{1}{2^m}\#\left(X-\bigcup_{i=1}^m V(s_i)\right)(\F_q)\]
For each $i=1,\dots,m$, note that 
$X\cap V(s_i)$ is a proper closed subset of $X$ and therefore has dimension $n-1$. Thus $\#(X\cap V(s_i))(\F_q)=O(q^{n-1})$.  

The Lang--Weil bound \cite{LW_bound} (or the version 
\cite[Theorem 7.1]{Cafure_Matera} with an explicit error term) implies
\[\#X(\F_q)=q^n+O(q^{n-1/2}). \]
Therefore
\[N_S(f_1,\dots,f_m)=\frac{1}{2^m}\#X(\F_q)+O(q^{n-1})=\frac{q^n}{2^m}+O(q^{n-1/2}).\qedhere\]
\end{proof}

\begin{Lem} Let $k$ be a finite field, and let $\beta\in k(x_1,\dots,x_n)$. Suppose that $\beta$ is a square in 
$\overline{k}(x_1,\dots,k_n)$. Then there exists a $\lambda\in k^*$ such that $\lambda\beta$ is a square in 
$k(x_1,\dots,x_n)$. 
\label{lemma_k_to_k_bar_condition}
\end{Lem}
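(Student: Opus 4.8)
The plan is to reduce the statement to unique factorization in the polynomial ring and to control how irreducible factors behave under the base change $k\to\overline{k}$. Since $k[x_1,\dots,x_n]$ is a UFD, I would first write $\beta=\lambda_0\prod_i p_i^{a_i}$, where $\lambda_0\in k^*$, the $p_i$ are pairwise distinct monic irreducible polynomials in $k[x_1,\dots,x_n]$, and $a_i\in\Z$ (discarding any factor with $a_i=0$). The goal then becomes to show that the hypothesis forces every $a_i$ to be even; granting this, $\prod_i p_i^{a_i}=(\prod_i p_i^{a_i/2})^2$ is already a square in $k(x_1,\dots,x_n)$, so $\lambda\colonequals\lambda_0^{-1}\in k^*$ satisfies $\lambda\beta=\prod_i p_i^{a_i}$, as desired. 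The constant $\lambda_0$ plays no role beyond being absorbed into $\lambda$.

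The crux is the following base-change statement: the square-free polynomial $h\colonequals\prod_i p_i\in k[x_1,\dots,x_n]$ remains square-free in $\overline{k}[x_1,\dots,x_n]$. This is exactly where the finiteness (equivalently, perfectness) of $k$ enters: since $h$ is square-free, $k[x_1,\dots,x_n]/(h)$ is reduced, and over a perfect field a reduced finite-type algebra stays reduced after any field extension, so $\overline{k}[x_1,\dots,x_n]/(h)=(k[x_1,\dots,x_n]/(h))\otimes_k\overline{k}$ is reduced, i.e.\ $h$ is square-free over $\overline{k}$. I expect this to be the main (and essentially the only) obstacle, and it is genuinely needed: over a non-perfect field the conclusion can fail, for instance $x_1^p-t$ is irreducible over $\F_p(t)$ but equals $(x_1-t^{1/p})^p$ over the algebraic closure.

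With square-freeness of $h$ over $\overline{k}$ in hand, I would factor $h=\prod_t Q_t$ into distinct monic irreducibles $Q_t\in\overline{k}[x_1,\dots,x_n]$, each of multiplicity one. Each $Q_t$ divides $h=\prod_i p_i$, hence divides some $p_i$; and it cannot divide two distinct $p_i,p_{i'}$, for then $Q_t^2\mid p_ip_{i'}\mid h$ would contradict square-freeness of $h$. Thus each $Q_t$ divides a unique $p_{i(t)}$, with multiplicity one, and over $\overline{k}$ we obtain the irreducible factorization $\beta=\lambda_0\prod_t Q_t^{a_{i(t)}}$. Since $\overline{k}$ is algebraically closed, $\lambda_0$ is a square there, so $\beta$ is a square in $\overline{k}(x_1,\dots,x_n)$ if and only if every exponent $a_{i(t)}$ is even, that is, if and only if every $a_i$ is even. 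By hypothesis $\beta$ is a square over $\overline{k}$, so all $a_i$ are even, and the conclusion follows as in the first paragraph.
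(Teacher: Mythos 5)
Your proof is correct and rests on exactly the same key idea as the paper's: a square-free polynomial over the perfect field $k$ remains square-free over $\overline{k}$, proved in both cases via reducedness of $k[x_1,\dots,x_n]/(h)$ being preserved under base change. The remaining bookkeeping differs only cosmetically --- the paper compares square-free parts in the identity $fv^2=gu^2$ to conclude $f=\lambda g$, while you track parity of exponents in the full irreducible factorization --- so this is essentially the paper's argument.
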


\begin{proof}
Write $\beta=f/g$ with $f,g\in k[x_1,\dots,x_n]$. 
We can assume that $f$ and $g$ are square-free in the UFD $k[x_1,\dots,x_n]$. We claim that $f$ and $g$ remain square-free in the UFD $\overline{k}[x_1,\dots,x_n]$. Indeed, the $k$-algebra $A\colonequals k[x_1,\dots,x_n]/(f)$ is reduced; since $k$ is a perfect field, the $\overline{k}$-algebra $A\otimes_k\overline{k}$ is also reduced, or, equivalently, $f$ is square-free in $\overline{k}[x_1,\dots,x_n]$. Similarly for $g$.    

By assumption, $f/g=u^2/v^2$ in $\overline{k}(x_1,\dots,x_n)$, for some $u,v\in\overline{k}[x_1,\dots,x_n]$; in other words, $fv^2=gu^2$ in $\overline{k}[x_1,\dots,x_n]$. The square-free part of an element in a UFD is unique up to scaling by a unit. Therefore $f=\lambda g$ for some $\lambda\in\overline{k}$. But then $\lambda=f/g$ belongs to $\overline{k}\cap k(x_1,\dots,x_n)=k$. In particular, $\lambda\beta=\lambda^2$ is a square in $k(x_1,\dots,x_n)$. \end{proof}

\begin{proof}[Proof of Theorem \ref{thm_improved_error}.]
Let $\nu\in\F_q$ be a non-square.  Consider
\[
g_i=
\begin{cases}
f_i(x_0,\dots,x_n)-s_i^2\quad &\text{if}\ i\in S;\\
f_i(x_0,\dots,x_n)-\nu s_i^2\quad &\text{if}\ i\notin S.
\end{cases}
\]
Consider the projective variety $X=V(g_1,\dots,g_m)\subset\P^{n+m}_{[x_0\colon\dots\colon x_n\colon s_1\colon\dots\colon s_m]}$. As in the proof of Theorem \ref{thm_independent_events}, $X$ is geometrically irreducible and comes with a finite surjective morphism 
$\varphi\colon X\to\P^n_{[x_0\colon\dots\colon x_n]}$ of degree $2^m$. In particular, $\dim X=n$ and $X$ is a complete intersection in $\P^{n+m}$. The restriction of $\varphi$ to
$X-\bigcup_{i=1}^m V(s_i)$ is $2^m\colon 1$ on $\F_q$-points. Therefore
\begin{equation}
N_S(f_1,\dots,f_m)=\frac{1}{2^m}\#\left(X-\bigcup_{i=1}^m V(s_i)\right)(\F_q).
\label{eq:fraction_for_N_S}
\end{equation}

We will compute the right-hand side of (\ref{eq:fraction_for_N_S}) by using the Inclusion-Exclusion Principle. For $0\leq r\leq m$ and an $r$-element subset $\{i_1,\dots,i_r\}$ of $\{1,\dots,m\}$, set
$X_{i_1,\dots,i_r}=X\cap V(s_{i_1},\dots,s_{i_r})$. 

\begin{Lem}
We have $\dim (X_{\sing})\leq \sigma$.
\end{Lem}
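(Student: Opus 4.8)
The plan is to apply the Jacobian criterion to the complete intersection $X$ and to propagate the resulting singularity condition through the finite projection $\varphi$. Working over $\overline{\F_q}$, a geometric point $p=[\underline{x}\colon\underline{s}]\in X$ is singular precisely when the $m\times(n+m+1)$ Jacobian matrix $J=\big[\,\Jac(g_1,\dots,g_m)\,\big](p)$ has rank $<m$. First I would record its block structure
\[
J=\big[\,\Jac(f_1,\dots,f_m)(\underline{x})\ \big|\ \mathrm{diag}(c_1 s_1,\dots,c_m s_m)\,\big],
\]
where $c_i\in\{-2,-2\nu\}$ is nonzero since $\fldchar k\neq 2$; here the first $n+1$ columns are the $x_j$-derivatives and the last $m$ columns the $s_j$-derivatives.

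The key step is a rank identity. For $p\in X$, set $T=\{i : s_i(p)=0\}$ and $t=|T|$. The $s$-columns indexed by $i\notin T$ are nonzero multiples of distinct standard basis vectors, so they are independent and their span meets the $x$-columns trivially; projecting the row space of $J(p)$ onto the $s$-coordinates then yields
\[
\rk J(p)=(m-t)+\rk\Jac(f_i : i\in T)(\underline{x}).
\]
Consequently $p\in X_{\sing}$ if and only if $t\geq 1$ and $\rk\Jac(f_i : i\in T)(\underline{x})<t$. Moreover, for $i\in T$ the equation $g_i=0$ together with $s_i=0$ forces $f_i(\underline{x})=0$, so automatically $\underline{x}\in V(f_i : i\in T)$; combined with the rank drop this says exactly that $\underline{x}\in T(f_i : i\in T)$ in the notation introduced before the theorem. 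In particular every singular point has $T\neq\emptyset$, since a point with all $s_i\neq 0$ is automatically smooth.

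It then follows that $\varphi(X_{\sing})\subseteq\bigcup_{\emptyset\neq T\subseteq\{1,\dots,m\}}T(f_i : i\in T)$, and the right-hand side has dimension $\max_{\emptyset\neq T}\dim T(f_i : i\in T)\leq\sigma$ directly from the definition of $\sigma$. Since $\varphi$ is finite, hence has finite fibres and preserves dimension on closed subsets, its restriction to $X_{\sing}$ gives $\dim X_{\sing}=\dim\varphi(X_{\sing})\leq\sigma$, as desired. I expect the rank identity to be the one step requiring care: one must check cleanly that the invertible $s$-columns for $i\notin T$ contribute exactly $m-t$ to the rank independently of the $x$-block, so that the remaining rank is governed solely by the $x$-Jacobian of the sub-collection $\{f_i : i\in T\}$, and one must keep in mind that membership in $V(f_i : i\in T)$ is automatic on this locus, so that the image genuinely lands in the sets $T(f_i : i\in T)$ whose dimensions define $\sigma$.
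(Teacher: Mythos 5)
Your proof is correct and takes essentially the same route as the paper: both read off the block structure $(\Jac(f_1,\dots,f_m)\mid D)$ of the Jacobian, stratify $X$ according to which coordinates $s_i$ vanish, observe via the Jacobian criterion that singular points on each stratum map under the finite morphism $\varphi$ into the corresponding set $T(f_i : i\in T)$, and conclude since finite morphisms preserve dimension. The only difference is cosmetic—you make the rank identity explicit where the paper leaves it implicit—though your phrase ``their span meets the $x$-columns trivially'' is neither needed nor generally true: the identity follows by computing the column space of $J(p)$ modulo $\mathrm{span}(e_i : i\notin T)$, which projects the $x$-block onto the rows indexed by $T$.
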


\begin{proof}
Notice that
\[\Jac(g_1,\dots,g_m)=(\Jac(f_1,\dots,f_m)\ |\   D),\]
where $D$ is the diagonal matrix whose $(i,i)$-entry is $-2s_i$ if $i\in S$ or $-2\nu s_i$ if $i\notin S$.
For $0\leq r\leq m$ and an $r$-element subset $\{i_1,\dots,i_r\}$ of $\{1,\dots,m\}$, set 
\[X_{i_1,\dots,i_r}^+=X_{i_1,\dots,i_r}\cap \{s_j\neq 0\ \text{for each}\ j\notin\{i_1,\dots,i_r\}\}.\]
Then $X$ is the union of the locally closed $X_{i_1,\dots,i_r}^+$, and we need to establish that $X_{\sing}\cap X_{i_1,\dots,i_r}^+$ has dimension at most $\sigma$. The Jacobian criterion for $X$ implies 
\[X_{\sing}\cap X_{i_1,\dots,i_r}^+\subset\varphi^{-1}(T_{i_1,\dots,i_r});\]
since $\varphi$ is a finite map and $\dim T_{i_1,\dots,i_r}\leq\sigma$, the conclusion follows. 
\end{proof}

\begin{Lem}
Let $0\leq r\leq \min(m,l)$, and let $\{i_1,\dots,i_r\}$ be an $r$-element subset of $\{1,\dots,m\}$. 
Then 
$X_{i_1,\dots,i_r}$ is geometrically irreducible of dimension $n-r$ (in particular, it is a complete intersection in 
$\P^{n+m}$) and $\dim (X_{i_1,\dots,i_r})_{\sing}\leq \sigma$. For $l+1\leq r\leq m$, we have $\dim X_{i_1,\dots,i_r}\leq n-l-1$.
\end{Lem}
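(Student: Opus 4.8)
The plan is to prove the first assertion (for $0\le r\le\min(m,l)$) by induction on $r$, and then to deduce the dimension bound for $l+1\le r\le m$ as a consequence. Throughout I work over $\overline{\F_q}$, so that ``irreducible'' means ``geometrically irreducible''. For the base case $r=0$ we have $X_{\emptyset}=X$, which is geometrically irreducible of dimension $n$, a complete intersection, and satisfies $\dim X_{\sing}\le\sigma$ by the previous lemma. The key geometric input I would record first is that $\varphi$ restricts to a finite \emph{surjection} $X_{i_1,\dots,i_r}\to V(f_{i_1},\dots,f_{i_r})$: any $[\underline{x}]\in V(f_{i_1},\dots,f_{i_r})$ lifts to $X_{i_1,\dots,i_r}$ over $\overline{\F_q}$ by setting $s_{i_1}=\dots=s_{i_r}=0$ and extracting a square root of $f_j(\underline{x})$ (divided by the relevant unit) for each $j\notin\{i_1,\dots,i_r\}$.

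For the inductive step, fix $1\le r\le\min(m,l)$ and an $r$-element subset, and set $Z=X_{i_1,\dots,i_{r-1}}$, which by the inductive hypothesis is a geometrically irreducible complete intersection of dimension $n-r+1$. Since $Z$ is irreducible and $\varphi|_Z$ is a finite surjection onto $V(f_{i_1},\dots,f_{i_{r-1}})$, the latter is irreducible of dimension $n-r+1$. Now $r\le\min(m,l)\le\min(l+1,m)$, so the standing hypothesis gives $V(f_{i_1},\dots,f_{i_{r-1}})\not\subset V(f_{i_r})$; thus $f_{i_r}$ does not vanish identically on $\varphi(Z)$, and because $f_{i_r}=\text{(unit)}\,s_{i_r}^2$ on $Z$ this forces $Z\not\subset V(s_{i_r})$. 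Consequently $X_{i_1,\dots,i_r}=Z\cap V(s_{i_r})$ is cut out of $Z$ by a nonzerodivisor, hence is a pure-dimensional complete intersection of dimension $n-r$.

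Next I would bound its singular locus and then upgrade pure-dimensionality to irreducibility. Running the Jacobian computation of the previous lemma for the $m+r$ equations $g_1,\dots,g_m,s_{i_1},\dots,s_{i_r}$, and stratifying $X_{i_1,\dots,i_r}$ by which of the remaining coordinates $s_j$ ($j\notin\{i_1,\dots,i_r\}$) vanish, one checks that a point of the stratum on which exactly the coordinates indexed by $\{i_1,\dots,i_r\}\cup J$ vanish can be singular only if its image under $\varphi$ lies in $T(f_{i_1},\dots,f_{i_r},(f_j)_{j\in J})$; as each such locus has dimension at most $\sigma$ and $\varphi$ is finite, $\dim (X_{i_1,\dots,i_r})_{\sing}\le\sigma$. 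Since $r\ge 1$ forces $l\ge 1$ and hence $\sigma=n-m-l-1$, and since $m\ge 1$ gives $r\le l\le m+l-1$, we obtain $\sigma\le n-r-2$, i.e. the singular locus has codimension at least $2$ in $X_{i_1,\dots,i_r}$. Being a complete intersection, $X_{i_1,\dots,i_r}$ is Cohen--Macaulay, hence satisfies Serre's condition $S_2$, while the codimension bound gives $R_1$; by Serre's criterion it is normal. A complete intersection of positive dimension in projective space is connected (here $n-r\ge n-l\ge 1$), and a connected normal variety is irreducible, completing the induction.

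Finally, for $l+1\le r\le m$ we have $l+1\le m$, so the case $r=l$ of the induction applies: $X_{i_1,\dots,i_l}$ is geometrically irreducible of dimension $n-l$, and exactly as above (invoking the hypothesis at $r'=l+1\le\min(l+1,m)$) it is not contained in $V(s_{i_{l+1}})$. Hence $X_{i_1,\dots,i_{l+1}}$ has dimension $n-l-1$, and since $X_{i_1,\dots,i_r}\subseteq X_{i_1,\dots,i_{l+1}}$ we conclude $\dim X_{i_1,\dots,i_r}\le n-l-1$. The main obstacle is the irreducibility claim: a hyperplane section of an irreducible variety need not be irreducible, so the crux is the Serre-criterion argument, which in turn rests on the singular-locus bound $\dim(X_{i_1,\dots,i_r})_{\sing}\le\sigma\le n-r-2$ together with the Cohen--Macaulayness and connectedness of complete intersections.
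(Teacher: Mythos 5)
Your proof is correct, and much of it runs parallel to the paper's: the pure-dimension drop is obtained exactly as in the paper (the hypothesis $V(f_{i_1},\dots,f_{i_{r-1}})\not\subset V(f_{i_r})$ forces $X_{i_1,\dots,i_{r-1}}\not\subset V(s_{i_r})$, since $f_{i_r}$ is a unit times $s_{i_r}^2$ on $X$), and your stratified Jacobian computation is the same calculation that the paper packages as the identification of $(X_{i_1,\dots,i_r})_{\sing}$ with $X_{\sing}\cap V(s_{i_1},\dots,s_{i_r})$ combined with the earlier bound $\dim X_{\sing}\leq\sigma$; likewise your treatment of the range $l+1\leq r\leq m$ (run the dimension-drop argument once more at $r=l+1$ and use $X_{i_1,\dots,i_r}\subset X_{i_1,\dots,i_{l+1}}$) is exactly the paper's. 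The genuine divergence is the irreducibility step. The paper stays elementary: if $Y$ and $Z$ were two distinct irreducible components, each of dimension $n-r$ inside $\P^{n+m-r}$, the projective dimension theorem gives $\dim(Y\cap Z)\geq n-m-r\geq 0$, and since $Y\cap Z$ lies in the singular locus this forces $n-m-r\leq\sigma$, i.e.\ $r\geq l+1$, a contradiction. You instead argue via Serre's criterion: the complete-intersection scheme is Cohen--Macaulay, hence $S_2$; your inequality $\sigma\leq n-r-2$ gives $R_1$; hence the scheme is normal, and normal plus connected (connectedness of positive-dimensional complete intersections in projective space) implies irreducible. Both routes turn on the same singular-locus bound, but the paper's uses only classical projective dimension theory, whereas yours imports two heavier (though standard) facts --- Serre's criterion and the connectedness theorem for complete intersections --- and in exchange yields a slightly stronger conclusion: $X_{i_1,\dots,i_r}$ is normal, so in particular the complete-intersection scheme is reduced; your numerical requirement $r\leq m+l-1$ is also milder than the $r\leq l$ needed for the paper's component-intersection contradiction, though this makes no difference within the stated range $r\leq\min(m,l)$.
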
 

\begin{proof}
The case $l=0$ is clear, so suppose $l=n-m-\sigma-1>0$. 
We prove by induction on $r$ that for any $r$-element subset 
$\{i_1,\dots,i_r\}$ of $\{1,\dots,m\}$, the following hold in turn:
(1) $X_{i_1,\dots,i_r}$ has pure dimension $n-r$; (2) 
$\dim (X_{i_1,\dots,i_r})_{\sing}\leq \sigma$;  
(3) $X_{i_1,\dots,i_r}$ is geometrically irreducible. 
These hold for $r=0$. Suppose $1\leq r\leq \min(m,l)$ and (1)--(3) hold for $r-1$. Consider $\{i_1,\dots,i_r\}$ of cardinality $r$. 
First note that
$X_{i_1,\dots,i_r}\subsetneq X_{i_1,\dots,i_{r-1}}$ since there exists a point $P\in\P^n$ such that $f_{i_r}(P)\neq 0$ while $f_j(P)=0$ for $j\in\{i_1,\dots,i_{r-1}\}$.
Therefore $X_{i_1,\dots,i_r}=X_{i_1,\dots,i_{r-1}}\cap V(s_{i_r})$ has pure dimension $n-r$. Next, 
view $X_{i_1,\dots,i_r}$ as a complete intersection in $\P^{n+m-r}$ defined by $m$ equations, namely $f_j(x_0,\dots,x_n)=0$ for $j\in\{i_1,\dots,i_r\}$ and $g_j=0$ for $j\notin\{i_1,\dots,i_r\}$.  Write down the Jacobian matrix for these equations and note that
$(X_{i_1,\dots,i_r})_{\sing}$ identifies with 
$X_{\sing}\cap V(s_{i_1},\dots,s_{i_r})$; therefore,
$\dim (X_{i_1,\dots,i_r})_{\sing}\leq \dim X_{\sing}\leq\sigma$. Finally, suppose $Y,Z$ are two irreducible components of $X_{i_1,\dots,i_{r}}$ over $\overline{\F_q}$. Each of them has dimension $n-r$. Then $\dim (Y\cap Z)\geq n-m-r$. But $Y\cap Z$ is contained in  $(X_{i_1,\dots,i_r})_{\sing}$. Therefore $n-m-r\leq \sigma$, giving $r\geq n-m-\sigma=l+1$. This contradiction establishes (3). For $r=l+1$, the part of the argument concerning the dimension drop still applies, establishing the second part of the statement.    
\end{proof}

Let $0\leq r\leq m$ and let $\{i_1,\dots,i_r\}$ be an $r$-element subset of $\{1,\dots,m\}$. If $r\geq l+1$, then $\dim X_{i_1,\dots,i_r}\leq n-l-1$ and $\#X_{i_1,\dots,i_r}(\F_q)=O(q^{n-l-1})$ is absorbed into the error term $O(q^{\gamma})$ in (\ref{eq:main_thm_formula_error}). For $0\leq r\leq\min(m,l)$, we can apply the bound of Hooley and Katz (\cite{Hooley}, \cite{Katz}) 
\[\#X_{i_1,\dots,i_r}(\F_q)=\pi_{n-r}(q)+O(q^{(n-r+\sigma+1)/2}).\]
Substituting these into the Inclusion-Exclusion expansion of (\ref{eq:fraction_for_N_S})
gives (\ref{eq:main_thm_formula_error}), since the error terms  are absorbed into the error term 
$O(q^{\gamma})$ (note that $(n-r+\sigma+1)/2\leq (n+\sigma+1)/2\leq\gamma$). 
\end{proof}

\end{document}